\theoremstyle{plain} 
\newtheorem{theorem}{Theorem}
\theoremstyle{definition} 
\theoremstyle{definition} 
\theoremstyle{remark} 
\theoremstyle{remark} 
\newtheorem*{remark*}{Remark}
\newcommand{\eqD}{\stackrel{\mathrm{D}}=}
\renewcommand{\le}{\leqslant}
\renewcommand{\ge}{\geqslant}
\newcommand{\si}{\sigma}
\renewcommand{\Psi}{\overline{\Phi}}
\newcommand{\E}{\operatorname{\mathsf{E}}}
\newcommand{\Var}{\operatorname{\mathsf{Var}}}
\newcommand{\Cov}{\operatorname{\mathsf{Cov}}}
\begin{document}


\begin{frontmatter}

\title{Relations between the first four moments}
\runtitle{First four moments}

\begin{aug}
\author{\fnms{Iosif} \snm{Pinelis}\ead[label=e1]{ipinelis@math.mtu.edu}}
\runauthor{Iosif Pinelis}


\address{Department of Mathematical Sciences\\
Michigan Technological University\\
Houghton, Michigan 49931, USA\\
E-mail: \printead[ipinelis@mtu.edu]{e1}}
\end{aug}

\begin{abstract}
One of the results is that $\E X^3\le(\frac 4{27})^{1/4}(\E X^4)^{3/4}$ for all random variables $X$ with $\E X\le0$, and the constant factor $(\frac 4{27})^{1/4}$ here is the best possible. 
\end{abstract}

\begin{keyword}[class=AMS]
\kwd
{60E15}
\end{keyword}

\begin{keyword}
\kwd{upper bounds}
\kwd{probability inequalities}
\kwd{moments}
\end{keyword}





\end{frontmatter}



Let $X$ be any random variable (r.v.) with moments 
\begin{equation*}
	m_j:=\E X^j
\end{equation*}
for $j=0,1,\dots$, using the convention $0^0:=1$, so that $m_0=1$. 

It is clear that $m_3\le\E|X|^3\le m_4^{3/4}$. Moreover, $m_3=m_4^{3/4}$ if (and only if) the r.v.\ $X$ is a nonnegative constant. So, $c=1$ is the best constant factor in the inequality 
\begin{equation}\label{eq:c}
	m_3\le cm_4^{3/4}
\end{equation}
over all r.v.'s $X$ satisfying the condition $0<m_4<\infty$, which will be henceforth assumed. 

However, it will shown in this note that the constant $c$ in \eqref{eq:c} can be improved precisely to $(\frac 4{27})^{1/4}=0.620\dots$ under the additional condition 
\begin{equation}\label{eq:conds}
m_1\le0, 
\end{equation}
which will be henceforth assumed as well. 
Condition \eqref{eq:conds} is satisfied in many applications, when the r.v.\ $X$ is either assumed to be zero-mean or is obtained by truncating a zero-mean r.v.\ from above. 

For any positive real $u$ and $v$, let $X_{u,v}$ stand for any zero-mean r.v.\ with values in the set $\{-u,v\}$; note that, given any such $u$ and $v$, the distribution of $X_{u,v}$ is uniquely determined.  


\begin{theorem}\label{th:1} 
One has 
\begin{align}
	m_3&\le\sqrt{m_4m_2-m_2^3} \label{eq:1} \\
	\intertext{and hence} 
	m_3&\le\Big(\frac 4{27}\Big)^{1/4}
	\,m_4^{3/4}.  \label{eq:2}
\end{align}
The equality in \eqref{eq:1} obtains if and only if $X\eqD X_{u,v}$ for some positive real $u$ and $v$, where $\eqD$ denotes the equality in distribution. 
The equality in \eqref{eq:2} obtains if and only if 
$X\eqD X_{u,v}$ with $u=\frac{\sqrt3-1}{\sqrt2}\,\si$ and $v=\frac{\sqrt3+1}{\sqrt2}\,\si$ for some $\si\in(0,\infty)$. 
\end{theorem}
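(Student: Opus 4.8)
The plan is to prove \eqref{eq:1} via a single sum-of-squares identity, deduce \eqref{eq:2} from it by a one-variable optimization, and recover the equality cases by running these arguments backwards. For \eqref{eq:1}, first note that $m_2>0$ (otherwise $X=0$ a.s., contradicting $m_4>0$) and $m_4\ge m_2^2$ by Jensen's inequality applied to $X^2$, so the right-hand side of \eqref{eq:1} equals $\sqrt{m_2(m_4-m_2^2)}\ge0$ and the inequality is trivial when $m_3\le0$. When $m_3>0$, I would use the identity
\[
 \E\Bigl[\bigl(X^2-\tfrac{m_3}{m_2}\,X-m_2\bigr)^2\Bigr]=m_4-\frac{m_3^2}{m_2}-m_2^2+2\,m_3\,m_1,
\]
obtained by expanding the square and collecting moments. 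Its left-hand side is $\ge0$, while $2m_3m_1\le0$ by \eqref{eq:conds} and $m_3>0$; hence $m_4-m_3^2/m_2-m_2^2\ge0$, which is \eqref{eq:1} after multiplying through by $m_2$ and taking a square root. I expect the one genuinely non-routine step to be finding this quadratic — equivalently, anticipating that the extremizers are the two-point zero-mean laws $X_{u,v}$ and taking the monic quadratic whose roots are their two atoms; the rest is bookkeeping.

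Then \eqref{eq:2} follows by optimization. Assuming $m_3\ge0$ (otherwise \eqref{eq:2} is trivial) and squaring \eqref{eq:1} gives $m_3^2\le m_2m_4-m_2^3=g(m_2)$, where $g(t):=m_4t-t^3$ is maximized over $t\ge0$ at the interior point $t_*:=\sqrt{m_4/3}$, with $g(t_*)=\tfrac{2}{3\sqrt3}\,m_4^{3/2}$. Hence $m_3^4\le\tfrac4{27}\,m_4^3$, which is \eqref{eq:2} upon taking fourth roots (one checks $\bigl(\tfrac{2}{3\sqrt3}\bigr)^{1/2}=\bigl(\tfrac4{27}\bigr)^{1/4}$).

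For the equality cases I would trace the two displays backwards. In \eqref{eq:1}: equality fails if $m_3<0$ (the right side is $\ge0$); if $m_3=0$ it forces $m_4=m_2^2$, hence $X^2=m_2$ a.s.\ (equality in Jensen), and then $m_3=m_2m_1$ forces $m_1=0$, so $X\eqD X_{\sqrt{m_2},\sqrt{m_2}}$; if $m_3>0$, the chain collapses exactly when $\E[(X^2-\tfrac{m_3}{m_2}X-m_2)^2]=0$ and $m_3m_1=0$, i.e.\ $m_1=0$ and $X$ is supported on the two roots of $x^2-\tfrac{m_3}{m_2}x-m_2$, which have product $-m_2<0$ and so are $-u$ and $v$ for some $u,v>0$; thus $X\eqD X_{u,v}$, necessarily with $v\ge u$ since $v-u=m_3/m_2>0$. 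The converse is the direct computation that $X_{u,v}$ has $m_2=uv$, $m_3=uv(v-u)$, $m_4=uv(u^2-uv+v^2)$, so $m_3^2=m_2m_4-m_2^3$. For \eqref{eq:2} one needs in addition $m_2=t_*=\sqrt{m_4/3}$; substituting these formulas turns this into $u^2-4uv+v^2=0$, i.e.\ $v/u=2+\sqrt3$ (the root exceeding $1$), and writing $\si:=\sqrt{uv}=\sqrt{m_2}$ yields $u=\tfrac{\sqrt3-1}{\sqrt2}\,\si$ and $v=\tfrac{\sqrt3+1}{\sqrt2}\,\si$; conversely these attain equality by the same computation, which also confirms that $(\tfrac4{27})^{1/4}$ cannot be lowered, since such $X$ (with $m_1=0$) exist.
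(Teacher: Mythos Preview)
Your proof is correct and follows essentially the same route as the paper's: a positivity-of-a-square argument for \eqref{eq:1}, maximization in $m_2$ for \eqref{eq:2}, and back-tracing for the equality cases. The only cosmetic difference is that the paper uses the nonnegativity of the full $3\times3$ Hankel determinant of $(m_0,\dots,m_4)$ (equivalently, $\Cov(X^2,X)^2\le\Var(X^2)\Var X$), obtaining the slightly sharper intermediate bound $m_3^2\le m_2m_4-m_2^3-m_1^2m_4+2m_1m_2m_3$ before dropping the last two terms, whereas you go straight to the optimal quadratic $X^2-\tfrac{m_3}{m_2}X-m_2$; this makes your equality analysis a touch more explicit (you read off the two atoms directly) at the price of handling $m_3=0$ separately.
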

Note 
that the expression $m_4m_2-m_2^3$ under the square root in \eqref{eq:1} is always nonnegative. 

\begin{proof}[Proof of Theorem~\ref{th:1}] 
First here, it is straightforward to check the ``if'' parts of the statements about the equalities in \eqref{eq:1} and \eqref{eq:2}. 

Next, note that if $m_3<0$ then inequalities \eqref{eq:1} and \eqref{eq:2} are trivial. 
So, let us 
assume that $m_3\ge0$. 
The determinant of 
the obviously nonnegative quadratic form 
$ 
	Q(a_0,a_1,a_2):=\E(a_0+a_1X+a_2X^2)^2=\sum_{i,j=0}^2m_{i+j}a_ia_j  
$ 
is nonnegative. 
Therefore and by \eqref{eq:conds}, 
\begin{align}
	m_3^2&\le m_4m_2-m_2^3-m_1^2m_4+2m_1m_2m_3 \label{eq:ineq1} \\ 
	&\le m_4m_2-m_2^3, \label{eq:ineq2}
\end{align}
which implies inequality \eqref{eq:1}.   

Further, the equality in \eqref{eq:1} obtains only if both inequalities in \eqref{eq:ineq1} and \eqref{eq:ineq2} are in fact equalities. The equality in \eqref{eq:ineq2} implies that $m_1^2m_4=0$ and hence $m_1=0$. 
The equality in \eqref{eq:ineq1}
means that the determinant of the quadratic form $Q$ is zero or, equivalently, $a_0+a_1X+a_2X^2=0$ almost surely for some real $a_0,a_1,a_2$ such that at least one of them is nonzero; in other words, the support of the distribution of $X$ consists of at most two points (the real roots of the quadratic polynomial $a_0+a_1x+a_2x^2$), and this distribution is zero-mean. 
Thus, the necessary and sufficient condition for the equality in \eqref{eq:1} is verified. 

The upper bound in \eqref{eq:2} is obtained by the maximization in $m_2$ of the upper bound in \eqref{eq:1}, with the maximizer $m_2=\sqrt{m_4/3}$. 
Accordingly, the necessary and sufficient condition for the equality in \eqref{eq:2} follows from that for the equality in \eqref{eq:1}; at that, $\si^2=m_2=\sqrt{m_4/3}$. 
\end{proof}

\begin{remark*}
Inequality \eqref{eq:ineq1} can be rewritten as $\Cov(X^2,X)^2\le\Var(X^2)\Var X$, which is an instance of the Cauchy-Schwarz inequality. 
Also, one can use \eqref{eq:ineq1} to obtain exact upper and lower bounds on $m_3$ under conditions other than \eqref{eq:conds}. 
\end{remark*}

\end{document}